\newtheorem{theorem}{Theorem}[section]
\newtheorem{lemma}[theorem]{Lemma}
\newdimen\epsfxsize
\newcommand {\gap}     {\makebox[0.075 in]{}}
\newcommand{\cons}[1] {\left<{#1}\right>}
\newcommand{\R}{\mathbb{R}}
\newcommand{\ER} {1 \times \R^d}
\newcommand {\set}[1]  {\left\{ {#1} \right\}}
\newcommand{\Wdg} {\omega}
\newcommand{\PWdg}{\tilde{\omega}}
\begin{document}

\title{Observations on the perturbed wedge}
\author{Fred B. Holt}
\address{5520 - 31st Ave NE; Seattle, WA 98105; 
{\rm fbholt@earthlink.net}}

\date{posted ArXiv 3 Nov 2013}

\begin{abstract}
Francisco Santos has described a new construction, perturbing apart a non-simple face, to offer
a counterexample to the Hirsch Conjecture.  We offer a series of observations about the initial spindle and about the {\em perturbed wedge} construction.

\vskip .0325in

NOTE:  These are simply working notes, offering a series of observations on the
 construction identified by Santos.
\end{abstract}


\maketitle


\section{Introduction}
Coincidence in high dimensions is a delicate issue.  Santos has brought forward the
perturbed wedge construction \cite{Paco}, to produce a counterexample to
 the Hirsch conjecture.  We start in dimension $5$ with a non-simple 
 counterexample to the nonrevisiting conjecture.  If this counterexample were simple, 
 then repeated wedging would produce a corresponding counterexample to the 
 Hirsch conjecture.  Since our counterexample to the nonrevisiting conjecture is
 not simple, we need an alternate method to produce the corresponding counterexample to
 the Hirsch conjecture, and the perturbed wedge provides this method.

A $d$-dimensional spindle $(P,x,y)$ is a polytope with two distinguished 
vertices $x$ and $y$ such that every facet of $P$ is incident to either $x$ or $y$.
The {\em length} of the spindle $(P,x,y)$ is the distance $\delta_P(x,y)$.
The spindle $(P,x,y)$ is {\em all-but-simple} if every vertex of $P$ other than $x$ and $y$ is 
a simple vertex.

Our first observation is that a $d$-dimensional all-but-simple spindle $(P,x,y)$ 
of length $d+1$ is a counterexample to the nonrevisiting conjecture.

Let $P$ be a $d$-dimensional polytope with $n$ facets.  Let $y$ be a nonsimple vertex of $P$ incident to a facet $G$, and let $F$ be a facet not incident to $y$.  The {\em perturbed wedge}
$\tilde{\omega}_{F,G}P$ of $P$ is
a wedge of $P$ with foot $F$, which is a $(d+1)$-dimensional polytope with $n+1$ facets,
followed by a perturbation of the image of the facet $G$ in the coordinate for the new 
dimension.

Our second set of observations focus on the structure of nonsimplicities and how the image of
a nonsimple vertex progresses under iterated applications of the perturbed wedge.

\section{A counterexample to the nonrevisiting conjecture}

\begin{lemma} Let $(P,x,y)$ be a $d$-dimensional all-but-simple spindle of length $d+1$.
Then every path from $x$ to $y$ revisits at least one facet.
\end{lemma}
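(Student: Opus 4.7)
The plan is to argue by contradiction: show that any non-revisiting path $x = v_0, v_1, \ldots, v_k = y$ must satisfy $k \le d$, contradicting $\delta_P(x,y) = d + 1$.

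Since $P$ is a spindle, every facet contains $x$ or $y$, so the facet set partitions into $\mathcal{F}_X$ (contains $x$ only), $\mathcal{F}_Y$ (contains $y$ only), and $\mathcal{F}_{XY}$ (contains both). The first observation I would make is that non-revisiting forces every $F \in \mathcal{F}_{XY}$ to contain every $v_i$ on the path: the index set $\{i : v_i \in F\}$ is contiguous and contains both $0$ and $k$, so it must be all of $\{0,1,\ldots,k\}$. Each intermediate $v_i$ is simple, hence on exactly $d$ facets, of which exactly $|\mathcal{F}_{XY}|$ come from $\mathcal{F}_{XY}$. Writing $p_i$ for the number of $\mathcal{F}_X$-facets at $v_i$ and $q_i$ for the number of $\mathcal{F}_Y$-facets at $v_i$, I get $p_i + q_i = d - |\mathcal{F}_{XY}|$ at every intermediate vertex.

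Next I would track how $p_i$ and $q_i$ evolve along the path. For an interior edge $v_i v_{i+1}$ with both endpoints simple ($1 \le i \le k-2$), exactly $d-1$ facets contain the edge, so $v_i$ has one exclusive facet and $v_{i+1}$ has one exclusive facet. The facet exclusive to $v_i$ cannot lie in $\mathcal{F}_{XY}$ (which contains every $v_j$), and cannot lie in $\mathcal{F}_Y$ (otherwise it would contain both $y = v_k$ and $v_i$, and non-revisiting would force it to contain $v_{i+1}$ as well), so it lies in $\mathcal{F}_X$. By the symmetric argument the exclusive facet at $v_{i+1}$ lies in $\mathcal{F}_Y$, whence $p_{i+1} = p_i - 1$ and $q_{i+1} = q_i + 1$. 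For the endpoint edge $v_0 v_1$, the single facet at $v_1$ not containing $v_0 = x$ must lie in $\mathcal{F}_Y$ by the spindle property, giving $q_1 = 1$ and hence $p_1 = d - 1 - |\mathcal{F}_{XY}|$. Symmetrically $p_{k-1} = 1$.

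Combining these, the $k - 2$ interior transitions drop $p$ by exactly one each, so $(d - 1 - |\mathcal{F}_{XY}|) - (k - 2) = 1$, giving $k = d - |\mathcal{F}_{XY}| \le d$ and the desired contradiction. The step I expect to demand the most care is verifying that the exclusive facet at $v_i$ on an interior edge truly lies in $\mathcal{F}_X$ rather than in $\mathcal{F}_Y$ or $\mathcal{F}_{XY}$; that is where the spindle structure, the simplicity of interior vertices, and the non-revisiting hypothesis must be invoked together, and where the facet-counting bookkeeping needs to be pinned down cleanly.
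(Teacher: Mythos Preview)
Your argument is correct and follows essentially the same departure/arrival bookkeeping as the paper's proof: both track how many facets through $x$ meet each intermediate simple vertex and show this count drops by exactly one per step, forcing any non-revisiting path to have length at most $d$. Your explicit handling of $\mathcal{F}_{XY}$ (facets containing both $x$ and $y$) is in fact tidier than the paper's version, which tacitly treats $X$ and $Y$ as disjoint, and it yields the sharper equality $k = d - |\mathcal{F}_{XY}|$.
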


\begin{proof}
Let $X$ be the $n_1$ facets incident to $x$, and let $Y$ be the $n_2$ facets incident to $y$.
Let $\rho = [x,u_1,\ldots,u_{k-1},y]$ be a path from $x$ to $y$ of length $k > d$, with all of the
$u_i$ being simple vertices.

Then each $u_i$ is incident to $d$ facets.  $u_1$ is incident to $d-1$ facets in $X$ and one
facet in $Y$, and $u_k$ is incident to one facet in $X$ and $d-1$ facets in $Y$.  The incidence
table for $\rho$ looks like the following:
$$
\begin{array}{llcccccc|cccccc}
 & \multicolumn{6}{c}{X} & \multicolumn{6}{c}{Y}  \\ \hline
x: & n_1 \; {\rm vertices} & 1 & 1 & \cdots & 1 & \cdots  & 1 & & & & & &  \\
u_1: & d \; {\rm vertices}  &  &  &  & 1 & \cdots & 1 & 1 & & & & &  \\
  & &  & &  &  & \cdots &  & & \cdots & & & &  \\
u_{k-1}: &  d  \; {\rm vertices}  & &  &  &  &  & 1 & 1 & \cdots & 1 &  &  &   \\
y: &  n_2 \; {\rm vertices}  & &  &  &  &  &  & 1 & \cdots & 1 & \cdots & 1 & 1  \\ \hline
\end{array}
$$

Consider the facet-departures and facet-arrivals from $u_1$ to $u_k$ (the simple part of the path).  
If any of the arrivals were back
to a facet in $X$, this would be a revisit since these facets were all incident 
to the starting vertex $x$.  So the arrivals must all be in $Y$.  

Similarly, all of the departures must be from $X$.  Any departure from a facet in $Y$ would
create a revisit since all the facets in $Y$ are incident to the final vertex $y$.

There are too many arrivals and departures to prevent a revisit.
The vertex $u_1$ is incident to only  $d-1$ facets in $X$, and since each departure leaves
a facet of $X$, $u_j$ is incident to $d-j$ facets in $X$.  So $u_d$ has completely departed
from $X$.  Since $k > d$, $u_d$ occurs among the vertices $u_1, \ldots, u_{k-1}$, but since
$u_d$ is incident only to facets in $Y$, $u_d$ must already be the vertex $y$, and $\rho$ would 
have length at most $d$.
\end{proof}

Santos \cite{Paco} has produced all-but-simple spindles in dimension $5$ of length $6$. 
Currently the smallest example has $25$ facets.
So in dimension $5$ with $25$ facets, we have a counterexample to the nonrevisiting conjecture, with length well below the Hirsch bound. Since the spindle is not simple, at $x$ and $y$, 
the usual way of generating the corresponding counterexample \cite{KW, Hnr} to the Hirsch conjecture (through repeated wedging) does not directly apply, and we need an alternate construction.  

\subsection{A gap in the literature - }
Santos' work has highlighted a gap in the previous literature on the Hirsch conjecture, regarding the equivalence of the simple and nonsimple cases.  We have the previously established results:

\begin{enumerate}
\item If $P$ is a nonsimple counterexample to the Hirsch conjecture of dimension $d$ and $n$ facets, then there is a simple polytope $Q$ also of dimension $d$ and $n$ facets that is also a counterexample to the Hirsch conjecture.  $Q$ is obtained directly from $P$ by perturbing apart its nonsimple vertices.
\item If $P$ is a simple polytope of dimension $d$ and $n$ facets, with two vertices $x$ and $y$ such that every path from $x$ to $y$ includes a revisit to a facets, then there is a simple polytope $Q$ of dimension $n-d$ and $2n-2d$ facets that is a counterexample to the Hirsch conjecture.  $Q$ is obtained directly from $P$ by taking wedges over all (the images of) the $n-2d$ facets of $P$ not incident to either $x$ or $y$.
\item Any counterexample to the Hirsch conjecture is also a counterexample to the nonrevisiting conjecture.
\end{enumerate}

The interesting gap that Santos has addressed is this:  suppose we have a nonsimple counterexample to the nonrevisiting conjecture, then how do we produce the corresponding counterexample to the Hirsch Conjecture?

The $5$-dimensional all-but-simple spindle $(P,x,y)$ produced by Santos is a counterexample
to the nonrevisiting conjecture, as we have verified above, but there are no facets not incident
to either $x$ or $y$.  So we cannot apply the wedges as we would if $P$ were simple.

We also cannot arbitrarily perturb apart the facets incident to $x$ and $y$, to create a simple 
polytope that is a counterexample to the nonrevisiting conjecture.  The revisit on a path may 
well occur along the last edge, the one terminating in $y$.  If we perturb the facets incident to 
$y$, to create simple vertices, some of these revisits may be lost.

Santos has offered the perturbed wedge as an equivalent construction in working with spindles.
We shift our attention now to analyzing the combinatorics of the perturbed wedge.

\section{The perturbed wedge}
The perturbed wedge is constructed in two steps, first as a wedge over a facet, followed
by a perturbation of a facet incident to a nonsimple vertex of the wedge.

Let $P$ be a $d$-dimensional polytope with $n$ facets and $m$ vertices, and let $F=F(u)$ be 
a facet incident to $x$ in $P$.
The wedge $W = \omega_F(P)$ is a $(d+1)$-dimensional polytope with $n+1$ facets
and $2m - f_0(F)$ vertices.
The wedge $\Wdg_F P$ over $F$ in $P$, corresponds to the two-point
suspension over $u$ in $P^*$.

\begin{figure}[tb] 
\centering
\includegraphics[width=4.5in]{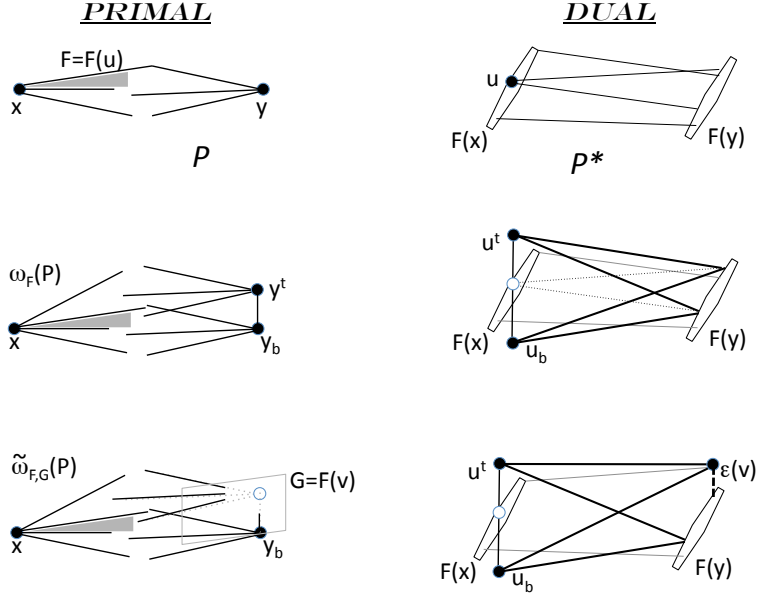}
\caption{\label{SantosWedgeFig} This figure illustrates the Santos perturbed wedge 
construction and its dual construction.  In the primal setting, we first perform a wedge
of $P$ over a facet $F$ which is incident to $x$, followed by a vertical perturbation of
a facet $G$ incident to $y$.}
\end{figure}

We now perturb a facet $G=F(v)$ in $W$ incident to the edge $[y^t, y_b]$.  This is already
interesting; we don't encounter non-simple edges until dimension $4$.
The perturbation of $G$ is accomplished by introducing a small vertical displacement
to $v$; that is, instead of the last coordinate of the outward-pointing normal vector to $G$ 
being $0$, we perturb this coordinate to $\epsilon > 0$.

\subsection{Embedded construction}
As a canonical embedding for polytopes, we consider the vertices of $P$
to be embedded in $\ER$, with $0$ in the interior of $P$.  For the facets
of $P$ we take their outward-pointing normals.  Since $0$ is interior to $P$,
we can assume that the first coordinate of each normal is $-1$.  $P$ is
given by the embedding:
$$ H^T_{n \times (d+1)} V_{(d+1) \times m}  \le \cons{0},$$
with
$$ \left[ -1 \gap h_i^T \right] \cdot \left[ \begin{array}{c} 1 \\ x_j \end{array} \right] = 0$$
iff vertex $j$ is incident to facet (hyperplane) $i$.
The facet-vertex incidence matrix for $P$ is given by the $\{0,1\}$-matrix
$$M_{n \times m}(P) = {\rm IsZero} \left( H^T V \right).$$

As a $d$-dimensional polytope, each vertex of $P$ is incident to at least
$d$ facets, and each $k$-face of $P$ is incident to at least $k+1$ vertices.
A simple vertex is incident to exactly $d$ facets.

For a $k$-face of $P$, each incident facet contributes either to the affine space
supporting this face or to its boundary \cite{Hblend}.
The space supporting this $k$-face is the intersection of at least $d-k$ facets of $P$,
and we say that the space is {\it simple} iff this space is given by the
coincident intersection of exactly $d-k$ facets of $P$. 
For $k > 0$, the boundary of the $k$-face is created by the various intersections of at least $k+1$
other facets with the supporting space of the face.
A face is simple iff its space is simple and all of its boundary elements are simple.
A face can be nonsimple in a variety of ways or in multiple ways, through the nonsimplicity
of its space or of its various boundary elements.

Let $F$ be represented by the first outward-pointing normal in $H^T$, and $G$ by the
last outward-pointing normal.  Then $H^T(\Wdg_F P)$ is given canonically by
$$ H^T(\Wdg_F P) = \left[  \begin{array}{ccc}
-1 & h_1^T & 1 \\
-1 & h_1^T & -1 \\
-1 & h_2^T & 0 \\
\vdots & \vdots & \vdots \\
-1 & h_n^T & 0 \end{array} \right].$$
The facet $F$ is replaced by two facets, the top and the base of the wedge.
The top has final coordinate $1$, and the base $-1$.
Every other facet is replaced by a single {\it vertical} facet, meaning that the last
coordinate (the new coordinate) is $0$.

The vertices of $\Wdg_F P$ are given as follows.  Rearrange the columns of $V$
so that the vertices incident to $F$ are given in the first block $V_F$ and
the rest of the vertices occur in a second block $V_{-}$ 
(denoted this way since $[-1 \; h_1^T]\cdot V_{-} < \cons{0}$).
\begin{eqnarray*}
V(P)  & =  & \left[ \begin{array}{cc} V_F & V_{-} \end{array} \right]. \\
  {\rm and} & &  \\
 V(\Wdg_F P) & = & \left[ \begin{array}{ccc}
 V_F & V_{-} & V_{-} \\
 \cons{0} & -[-1 \; h_1^T]\cdot V_{-} & [-1 \; h_1^T]\cdot V_{-}
 \end{array} \right].
 \end{eqnarray*}
 In $V(\Wdg_F P)$, the vertices are now embedded in $1 \times \R^{d+1}$.
 The last coordinate for vertices in the foot $F$ is $0$, and for vertices not in the
 foot, there are two images, one in the top and one in the base.

Let the facet $G$ have outward-pointing normal $[-1 \; h_n^T \; 0]$.  We perturb
the last coordinate to $\epsilon > 0$ to complete the construction of the perturbed
wedge.
$$ H^T(\PWdg_{F,G} P) = \left[  \begin{array}{ccc}
-1 & h_1^T & 1 \\
-1 & h_1^T & -1 \\
-1 & h_2^T & 0 \\
\vdots & \vdots & \vdots \\
-1 & h_{n-1}^T & 0 \\
-1 & h_n^T & \epsilon \end{array} \right].$$
Denote the outward-pointing normal for $\tilde{G}$ by 
$h_{\tilde{G}}^T = [-1 \; h_n^T \; \epsilon].$

To understand the effect of perturbing the facet $G$, we consider both $G$ and its
perturbed image $\tilde{G}$.  While we were able to write down the vertices of the
wedge $\Wdg_F P$ explicitly, the effect of the perturbation is more complicated.
The vertices incident to the facet $G$ are of three types:
\begin{itemize}
\item[Foot:] $v \in G \cap T \cap B$ (or $v \in G \cap F$).  For these vertices the last coordinate
is $0$, so $h_{\tilde{G}}^T v = 0$, and these vertices remain after the perturbation.

\vskip .0625in

\item[Top:] $v \in G \cap T  \backslash B$.  For these vertices, the last coordinate is positive,
so $h_{\tilde{G}}^T v > 0$.  If $v$ consists combinatorially of a single edge terminated by
$G$ -- the case when $v$ is a simple vertex but also when $v$ consists of a single 
nonsimple edge terminated by $G$ -- then the vertex $v$ is perturbed back along this
edge.  If $v$ consists combinatorially of more than one edge being terminated by $G$,
then $v$ is truncated away by the perturbation, and $\tilde{G}$ introduces vertices along
all of the edges incident to $v$ but not lying in $G$.

\vskip .0625in

\item[Base:] $v \in G \cap B  \backslash T$.  For these vertices, the last coordinate is negative,
so $h_{\tilde{G}}^T v < 0$.  If $v$ consists combinatorially of a single edge terminated by
$G$ -- the case when $v$ is a simple vertex but also when $v$ consists of a single 
nonsimple edge terminated by $G$ -- then the vertex $v$ is perturbed out along this
edge.  If $v$ consists combinatorially of more than one edge being terminated by $G$,
then $v$ remains as a vertex of $\PWdg_{F,G}P$,
and the perturbation reveals new edges emanating from $v$ and terminating in 
$\tilde{G}$ in new vertices.
\end{itemize}

We now consider the effect of the perturbed wedge on vertex $y$ and its natural images.

Under the wedge, $y$ has two natural images $y^t$ and $y_b$, in the top and base respectively.
Since $y$ is a nonsimple vertex, $y^t$ and $y_b$ are nonsimple vertices, and the edge
$[y_b,y^t]$ between them is a nonsimple edge.  The facet $G$ is one of the facets supporting 
the space of this edge.

When $G$ is perturbed to $\tilde{G}$, $y_b$ is preserved as a vertex, $y^t$ is truncated away,
and $\tilde{G}$ terminates the vertical edge at a new vertex $y_0$ whose last coordinate is $0$.
$\tilde{G}$ introduces new vertices along the edges of $\Wdg_F P$ incident to $y^t$ but not
lying in $G$.  $\tilde{G}$ also introduces new vertices along the new edges emanating from $y_b$
as revealed by $\tilde{G}$.  That is, the collection of facets $Y$ and the facet $B$ intersect in
edges that had lain beyond the facet $G$.  $\tilde{G}$ now introduces these edges as part of the
boundary of $\PWdg_{F,G}P$ and terminates them in new vertices.

In considering the implications of the perturbed wedge construction on the Hirsch conjecture,
we are interested in short paths from $x$ to $y$ in $P$ and their tight natural images
from $x$ to $y_0$ in $\PWdg_{F,G}P$.

{\em Claim:} The perturbed wedge does not introduce revisits on tight natural images of short paths.  
From our study of this construction, we are not seeing the mechanism that would introduce revisits,
but this note does not prove the absence of this mechanism.

For the construction of the counterexample to the Hirsch conjecture, the revisits already exist 
in the initial $5$-dimensional spindle.  We see below that as a general construction, the perturbed wedge does not always increase the length of the input polytope by $1$.  
However, the repeated application of the perturbed wedge to an all-but-simple spindle avoids the conditions of the following lemma.

\begin{figure}[tb] 
\centering
\includegraphics[width=4.5in]{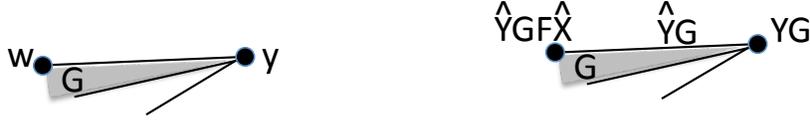}
\caption{\label{FacetFig} Consider the facet incidences in the circumstance that
$y$ has a neighbor $w$ along an edge of $G$ that terminates in the facet $F$, which
will be the foot of the wedge.  Denote the facet incidences at the nonsimple vertex $y$ 
as $YG$, in which $Y$ is a set of at least $d$ facets.  The edge from $y$ to $w$ is the
coincidence of $\hat{Y}G$ in which at least one facet of $Y$ is omitted from $\hat{Y}$.
The vertex $w$ is coincident with the facets $\hat{Y}G$ and $F$ and perhaps additional
facets $\hat{X}$.  $\hat{X}$ may be empty.}
\end{figure}

\begin{lemma}  Let $y$ be a nonsimple vertex of a $d$-dimensional polytope $P$.  Let $F$
be a facet of $P$ not incident to $y$, and let $G$ be a facet incident to $y$.
 If there is a nonsimple edge in $G$ from $y$ to a vertex $w$ in $F \cap G$, then this edge 
 remains after the perturbation.
\end{lemma}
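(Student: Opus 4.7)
The plan is to identify the natural image of the edge $[y,w]$ in the perturbed wedge and verify it survives as a $1$-face. Under $\omega_F P$ the vertex $w \in F \cap G$ stays as a single vertex in the foot (last coordinate $0$), while $y \notin F$ splits into $y^t$ in the top (last coordinate positive) and $y_b$ in the base (last coordinate negative). The edge $[y,w]$ thus has two wedge images, $[y^t, w]$ and $[y_b, w]$, both lying in the vertical lift of $G$. When $G$ is perturbed to $\tilde G$, the top image $y^t$ is cut off, since $h_{\tilde G}^T y^t = \epsilon (y^t)_{d+1} > 0$, while $y_b$ is strictly interior to $\tilde G$ ($h_{\tilde G}^T y_b < 0$) and $w$ stays on $\tilde G$. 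The surviving natural image, which I identify with ``this edge after perturbation,'' is therefore $[y_b, w]$.

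Using the labels in Figure~\ref{FacetFig}, $[y,w]$ in $P$ lies in $\hat Y \cup \{G\}$ with $|\hat Y \cup \{G\}| \geq d$ by nonsimplicity of the edge, so $|\hat Y| \geq d-1$. In the wedge, the vertical lifts of these facets together with the base $B$ all contain $[y_b, w]$: indeed $y_b \in B$ by construction, and $w$ lies in $B$ as a point of the foot $T \cap B$. After perturbation, $[y_b, w]$ is no longer in $\tilde G$ (since $y_b$ is interior to $\tilde G$), but it remains in the unchanged facets $\hat Y \cup \{B\}$, a total of $|\hat Y| + 1 \geq d$ facets, which is the minimum needed to support a $1$-face of the $(d+1)$-dimensional perturbed wedge.

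The remaining verification splits into three local checks. The vertex $y_b$ persists because its wedge incidence $Y \cup \{G, B\}$ becomes $Y \cup \{B\}$ of cardinality $\geq d+1$ after dropping $G$. The vertex $w$ persists with incidence $\hat Y \cup \{\tilde G\} \cup \hat X \cup \{T, B\}$, obtained from the wedge incidence by replacing $G$ with $\tilde G$. The open segment $(y_b, w)$ is not clipped by $\tilde G$: parametrizing $p(t) = (1-t) y_b + t w$ and using $h_G^T y_b = h_G^T w = 0$, one computes $h_{\tilde G}^T p(t) = \epsilon (1-t)(y_b)_{d+1}$, strictly negative on $[0,1)$ and zero only at $t = 1$. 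The main obstacle I anticipate is a subtle affine-dimension issue: I must confirm that $\hat Y \cup \{B\}$ in the wedge cuts out an affine line, not a higher-dimensional subspace, so that $[y_b, w]$ is a genuine edge rather than a chord of a larger face. Under the natural genericity in which the redundancy in the nonsimple edge $[y,w]$ lies within $\hat Y$, the facets $\hat Y$ alone determine the line through $y,w$ in $P$, and their vertical lifts together with $B$ determine the line through $y_b, w$ in the wedge, closing the argument.
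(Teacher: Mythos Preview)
Your argument is sound on its own terms, but you and the paper end up tracking \emph{different} surviving edges, and the paper's choice is the one that drives the application.

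You lift $[y,w]$ to the two edges $[y^t,w]$ and $[y_b,w]$ of the wedge and then show that the base copy $[y_b,w]\subset B$ persists after $G\to\tilde G$, supported by $\hat Y\cup\{B\}$.  The paper instead lifts $[y,w]$ to the whole triangular $2$-face $\{w,y^t,y_b\}$ supported by $\hat Y$, observes that perturbing $G$ slices this triangle along the horizontal plane $\{z=0\}$, and identifies the surviving edge as $[w,y_0]\subset\tilde G$, where $y_0=(1,y,0)$ is the new distinguished image of $y$.  In fact both edges are present: after perturbation the $2$-face becomes the triangle $\{w,y_0,y_b\}$, with your edge on $B$ and the paper's edge on $\tilde G$.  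The reason the paper singles out $[w,y_0]$ is that the perturbed-wedge construction tracks paths ending at $y_0$, not at $y_b$; the point of the lemma is that a step $w\to y$ in $P$ becomes a step $w\to y_0$ in $\tilde\omega_{F,G}P$, so the length from $x$ to $y_0$ does not go up.  Your edge $[y_b,w]$ is real but does not deliver that conclusion.

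On the genericity issue you flag: the paper's proof makes exactly the same tacit assumption.  When it writes ``the space of the triangular face is still defined by $\hat Y$,'' it is assuming that $\hat Y$ alone already cuts out the line through $y$ and $w$ in $P$ (so that its vertical lift is $2$-dimensional).  Your explicit acknowledgment of this point is actually more careful than the paper's treatment; neither argument handles the case where $G$ is essential to the affine span of the edge.
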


\begin{figure}[tb] 
\centering
\includegraphics[width=4.5in]{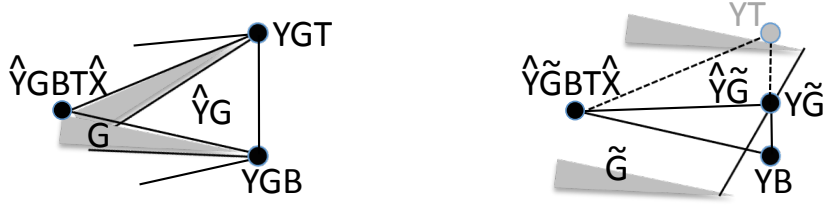}
\caption{\label{NbrVtxFig} Consider the action of the perturbed wedge on the
facet-coincidences at $y$ and $w$.  Under the wedge, $YG$ becomes an edge and
$\hat{Y}G$ becomes a $2$-face.  The vertex $y$ has two natural images, $y^t$ which is 
incident to the facets $YGT$, and $y_b$ which is incident to $YGB$.  The facet $F$ is 
replaced by two facets, the top $T$ and the base $B$.
Now we perturb the facet $G$, introducing a small positive value in the last coordinate of its outward
normal.
Since $y$ was nonsimple, the vertex $YB$ remains, but the vertex $YT$ is truncated away.
Instead, $\tilde{G}$ now intersects the vertical edge $Y$ in the plane 
$\set{1}\times \R^d \times \set{0}$.  The $2$-face $\hat{Y}G$  was nonsimple, and so the
$2$-face remains with its space supported by $\hat{Y}$, and $\tilde{G}$ intersects
it in an edge $[w,y_0]$.
}
\end{figure}

\begin{proof}
As a nonsimple edge, the 1-dimensional space of $[w,y]$ is defined by 
the coincidence of $G$ and at least $d-1$ other facets $\hat{Y}$.  
See Figure~\ref{FacetFig}. 
These facets $\hat{Y}G$
are incident to both $w$ and $y$.  The boundary of the edge at $w$ is established by
$F$ and possibly more facets $\hat{X}$, none of which can be incident to $y$.  The boundary
of the edge at $y$ is established by the facets $Y\setminus \hat{Y}$.

Under the wedge $\Wdg_F P$, the image of the edge is a nonsimple $2$-dimensional face,
the triangle with vertices $w$, $y^t$, and $y_b$.

Now, under the perturbation, the space of the triangular face is still defined by $\hat{Y}$.
$\tilde{G}$ intersects this face in the plane with last coordinate $0$, creating an edge from
$w$ to $y_0$.
\end{proof}


In the dual setting, the wedge over the facet $F(u)$ corresponds to a
two-point suspension $S_u(P^*)$ over the vertex $u$.  See Figure~\ref{SantosWedgeFig}.
The perturbation of the facet $G=F(v)$ corresponds to a vertical perturbation of the vertex $v$.
The observation in the previous lemma is that if the facet $Y$ is adjacent to a facet $W$, such
that $u,v \in W$ and that the ridge $Y \cap W$ is not simplicial, then after the two-point suspension
over $u$ and the perturbation of $v$, the new $Y$ is still adjacent to $W$ across the ridge, with
$v$ removed from the ridge.


\section{Technical notes on the proof of Santos Theorem 2.6}

In the proof of Theorem 2.6 in \cite{Paco}, there are a couple of assertions that have
interesting technical details regarding simplicity and nonsimplicity.

Theorem 2.6 is stated in the dual setting, and the second half of the proof of Theorem 2.6 describes
the perturbation.  Perturbing a vertex in the dual setting is equivalent  to perturbing the
corresponding facet in the primal setting.  So in selecting a facet to be perturbed (our facet $G$
in the construction above), Santos selects the corresponding vertex $a$:

\begin{center}{"...let $a$ be a vertex of $Q^+$..."} \end{center}

Initially, the spindle is all-but-simple, so the nonsimplicity is concentrated at the two special
vertices $x$ and $y$.  As we iteratively apply wedging, we create nonsimple edges, $2$-faces, and
so on.   There is structure to the nonsimplicity at the images of $x$ and $y$, and we have to
consider how this affects our choice of the facet (or its corresponding vertex in the dual) for 
perturbation.

\subsection{The structure of nonsimple faces}

A $k$-face $F^k$ of a polytope consists of a supporting $k$-dimensional space 
and a boundary.  A facet 
incident to $F^k$ contributes either to defining the supporting space or to defining the boundary.
So relative to a given $k$-face $F^k$, we can partition incident facets into two sets:  
the space-supporting facets and the boundary facets.

For a simple $k$-dimensional face, the supporting space is defined
by the intersection of exactly $d-k$ hyperplanes, and the boundary is given by the
intersection of at least $k+1$ additional hyperplanes with this supporting space. 
The intersections of the boundary hyperplanes with the space-defining hyperplanes and with
each other are all simple intersections.  (We note that vertices are exceptions.  The
$0$-dimensional space is given by the intersection of $d$ hyperplanes, but no additional
facets are required for the boundary.)

{\em Nonsimplicity}  Let $P$ be a $d$-dimensional polytope with facets $H^T$.
A {\em nonsimplicity} of dimension $k$ is a collection of $m$ facets $\tilde{H}^T$ in $H^T$
that satisfy the following conditions:
\begin{enumerate}
\item {\em Nonsimple intersection.} The intersection of the $m$ facets in $\tilde{H}^T$ 
is the supporting space for a $k$-dimensional face $F$ of $P$, with $m > d-k$.
\item {\em Maximality.}  No facet in $H^T \backslash \tilde{H}^T$ contains the supporting space
of $F$.
\item {\em Affine support.}  There is no facet $h^T$ in $\tilde{H}^T$ such that
the intersection of the facets $\tilde{H}^T \backslash h^T$ is more than $k$-dimensional.
\end{enumerate}

We call this last condition the requirement for {\em affine support} because in the dual setting,
we may know that the points $\tilde{H}$ are affinely dependent, but there may be points
in $\tilde{H}$ that do not contribute to this affine dependence.  That is, for any affine dependence
$\tilde{H} \cdot \alpha = 0$, the coordinate in $\alpha$ corresponding to $h$ is always $0$.
We will see examples of this below.

The {\it excess} of a nonsimplicity is the quantity $m-(d-k)$, the number of facets supporting
the face beyond those necessary in a simple polytope.

{\em Nonsimplicities in a nonsimple polytope.} 
A nonsimple polytope may contain several nonsimplicities, and these nonsimplicities
may be of different dimensions.  

Let's consider a few examples.  A pyramid over a 
hexagon is a $3$-dimensional polytope, whose apex $v$ is a nonsimple vertex,
but the spaces supporting all of the edges and $2$-faces are simple.  The prism over
this pyramid is a $4$-dimensional polytope whose only nonsimplicity is the edge
$[v_b, v^t]$.  The vertices $v_b$ and $v^t$ are not simple vertices, but in each case
the nonsimple vertex is the intersection of one facet with the nonsimplicity in the vertical edge.
So although the vertex $v^t$ is not simple, the facets incident to $v^t$ do not satisy the
condition of affine support.

For the next example, consider
our all-but-simple spindle $(P,x,y)$.  When we perform the wedge over a facet $F$ incident
to $x$, the nonsimple vertex $y$ generates a nonsimplicity in the vertical edge $[y_b, y^t]$.  
This edge contains the nonsimplicity, and its two vertices consist of the intersection of the
nonsimplicity supporting the vertical edge with the top facet $T$ and with the bottom facet $B$.  

Suppose we wanted to reduce the nonsimplicity of this edge $[y_b,y^t]$ by perturbing a facet.
If we perturb $T$ or $B$, the nonsimplicity is not changed.  We must perturb a facet that is
contributing to the nonsimplicity.  That is, to reduce the nonsimplicity of the edge $[y_b, y^t]$
we must perturb one of its space-supporting facets.

Continuing this example, if we now construct the wedge over a facet incident to the image of $x$ in the first wedge, the nonsimple edge $[y_b,y^t]$ generates a nonsimple $2$-face.  If the foot of this wedge is $T$ or $B$, then the image of $[y_b, y^t]$ is a triangle; otherwise it is a rectangle.
In either case the nonsimplicity occurs in the supporting $2$-space.  

The nonsimplicity of a nonsimple $k$-face can arise in any dimension from $0$ to $k$, and the 
nonsimplicity of this $k$-face may be a combination of various nonsimplicities across these dimensions.  So in choosing the facet to perturb as part of the construction of the perturbed wedge,
we need to choose a facet that is contributing to some nonsimplicity.  

This approach to nonsimplicity borrows heavily from the insights underlying the Gale transform
\cite{Gr, Zg}.

\subsection{ Implications for constructing the perturbed wedge.}
In the proof of Theorem 2.6 in \cite{Paco}, we select a facet to be perturbed by
selecting the corresponding dual vertex $a$:

\begin{center}{"...let $a$ be a vertex of $Q^+$..."} \end{center}

We now see that we cannot select just any vertex in $Q^+$.  We have to choose a vertex
that is contributing to the nonsimplicity.

In the primal setting, number the hyperplanes incident to the nonsimple vertex $y$
in the order in which they are perturbed or used
as the foot for wedging.

$$  \left[ \begin{array}{cc}
-1 & h_1^T \\
-1 & h_2^T \\
-1 & h_3^T \\
 \vdots & \vdots \\
-1 & h_m^T
\end{array} \right]  \cdot \left[ \begin{array}{c} 1 \\  y \end{array} \right] = \left[ \cons{0} \right]. $$

$$
\begin{array}{llll}
{\rm dim} P = d & {\rm dim} S = 0 & \#{\rm facets} = m & {\rm excess} = m-d
\end{array} $$

To begin with, we assume that the nonsimplicity is contained in the vertex itself and not in 
any incident faces.   After the first wedge, over a facet incident to the other nonsimple vertex $x$,
we perturb the first hyperplane incident to $y$.

$$ \left[ \begin{array}{ccc}
-1 & h_1^T & \epsilon_1 \\
-1 & h_2^T & 0 \\
-1 & h_3^T  & 0 \\
  &  \vdots &   \\
-1 & h_m^T & 0 
\end{array} \right]  \cdot \left[ \begin{array}{cc} 
1 & 0  \\  y & \cons{0} \\ 0 & -1  \end{array} \right] = \left[ \begin{array}{cc}
0 & -\epsilon_1 \\  
0 & 0 \\
 \vdots & \\
 0 & 0
\end{array} \right]. $$

$$
\begin{array}{llll}
{\rm dim} P = d+1 & {\rm dim} S = 1 & \#{\rm facets} = m-1 & {\rm excess} = m-d-1
\end{array} $$

The first column on the right hand side indicates that the vertex $y$ is still incident to all $m$ hyperplanes.
The second column indicates that the space supported by
$$ s_1 = \left[ \begin{array}{c} 0 \\ \cons{0} \\ -1 \end{array} \right] $$
contains an edge incident to hyperplanes $2 \ldots m$ and behind the first hyperplane.  
We know that this is a vector and not a point, since the first coordinate is $0$ (recall that we are
working in $1 \times \R^{d+1}$. 
So there is a nonsimple edge in $\PWdg(P)$ incident to $y$ in the direction $s_1$,
and this edge is incident to $m-1$ hyperplanes.  

It is the decreasing index of nonsimplicity, the excess, that Santos observes leads ultimately to 
a simple polytope.
From a nonsimple vertex in dimension $d$ incident to $m$ facets, we now have a nonsimple edge
in dimension $d+1$ incident to $m-1$ facets.

Let's work through a few more iterations of the perturbed wedge, to observe how the nonsimplicity progresses.  In the second iteration, we take the wedge over the second hyperplane.

$$ \begin{array}{c}
\left[ \begin{array}{cccc}
-1 & h_1^T & \epsilon_1 & 0 \\
-1 & h_2^T & 0 & -1 \\
-1 & h_2^T & 0 & 1 \\
-1 & h_3^T  & 0 & 0 \\
  &  \vdots &   &  \\
-1 & h_m^T & 0 & 0
\end{array} \right]  \cdot \left[ \begin{array}{ccc} 
1 & 0 & 0  \\  y & \cons{0} & \cons{0} \\ 0 & -1 & 0 \\ 0 & 0 & 1  \end{array} \right] = \left[ \begin{array}{ccc}
0 & -\epsilon_1 & 0 \\  
0 & 0 & -1 \\ 0 & 0 & 1 \\ 0 & 0 & 0 \\
 & \vdots & \\
 0 & 0 & 0
\end{array} \right].  \\ \\

\begin{array}{llll}
{\rm dim} P = d+2 & {\rm dim} S = 1 & \#{\rm facets} = m & {\rm excess} = m-d-1
\end{array}
\end{array} $$
Note that we now have $m+1$ hyperplanes in the left factor.  The right factor consists of the point
$y$ and two vectors $s_1$ and $s_2$.  What does the righthand side tell us?  The first column 
indicates that all $m+1$ hyperplanes are incident to the vertex $y$.  The second column
indicates that the edge supported by $s_1$ is incident to $m$ hyperplanes and lies behind
the first hyperplane, so this is a nonsimple edge of the polytope.  The third column tells us
that the space supported by $s_2$ is incident to all $m-1$ hyperplanes except the two natural 
images of $h_2^T$.  And this space lies behind one natural image of $h_2^T$ and beyond
the other.  

So the direction $s_2$ is pinched off by the wedge, and we have a polytope in dimension
$d+2$ with a vertex $y$ incident to $m+1$ facets and an incident edge containing the nonsimplicity.
The edge is incident to $m$ facets.  (As a reminder here, we are studying only the images
of the facets incident to $y$ in the original polytope.  Similar effects are occurring over at the
the vertex $x$.)


\underline{Case $\epsilon \rightarrow \omega$}.
Consider a variation on the wedge, in which we take the wedge over the same facet we 
previously perturbed.
$$ 
\begin{array}{c}
\left[ \begin{array}{cccc}
-1 & h_1^T & \epsilon_1 & -1 \\
-1 & h_1^T & \epsilon_1 & 1 \\
-1 & h_2^T & 0 & 0 \\
-1 & h_3^T  & 0 & 0 \\
  &  \vdots &   &  \\
-1 & h_m^T & 0 & 0
\end{array} \right]  \cdot \left[ \begin{array}{ccc} 
1 & 0 & 0  \\  y & \cons{0} & \cons{0} \\ 0 & -1 & 0 \\ 0 & 0 & 1  \end{array} \right] = \left[ \begin{array}{ccc}
0 & -\epsilon_1 & -1 \\  
0 & -\epsilon_1 & 1 \\   0 & 0 & 0 \\
 & \vdots & \\
 0 & 0 & 0
\end{array} \right]. \\ \\
 
\begin{array}{llll}
{\rm dim} P = d+2 & {\rm dim} S = 1 & \#{\rm facets} = m-1 & {\rm excess} = m-d-2; \\
    & {\rm dim} S = 0 & \#{\rm facets} = m+1 & {\rm excess} = m-d-1 \\
\end{array} 
\end{array}$$
The first column in the righthand side tells us that all of the facets are incident to the natural image
of $y$ (the first column in the second factor on the lefthand side).
The second and fourth columns give us two vectors supporting the nonsimplicity, with the natural 
images of $h_1^T$ and $h_3^T$ forming the boundary.
The third column tells us that the natural images of $h_2^T$, now the top and base of that iteration
of the wedge, support the $2$-dimensional nonsimplicity and together prevent the
nonsimplicity being $3$-dimensional.  This is an interesting case, in that there are two nonsimplicities:
the $1$-dimensional nonsimplicity supporting an edge of the polytope, and the vertex formed
by the intersection of this edge with $2$ additional hyperplanes.

Now consider a third iteration of the perturbed wedge.
In this third iteration, we perform a second wedge over a facet incident to $x$ and a 
second perturbation of a facet here at $y$.
$$ 
\begin{array}{c}
\left[ \begin{array}{ccccc}
-1 & h_1^T & \epsilon_1 & 0 & 0 \\
-1 & h_2^T & 0 & -1 & 0\\
-1 & h_2^T & 0 & 1 & 0\\
-1 & h_3^T  & 0 & 0 & \epsilon_3 \\
  &  \vdots &   &  & \\
-1 & h_m^T & 0 & 0 & 0 
\end{array} \right]  \cdot \left[ \begin{array}{cccc} 
1 & 0 & 0  & 0 \\  y & \cons{0} & \cons{0} & \cons{0} \\ 
0 & -1 & 0 & 0  \\ 0 & 0 & 1 & 0 \\ 0 & 0 & 0 & -1 \end{array} \right] = \left[ \begin{array}{cccc}
0 & -\epsilon_1 & 0 & 0 \\  0 & 0 & -1 & 0 \\ 0 & 0 & 1 & 0 \\
0 & 0 & 0 & -\epsilon_3 \\
 & \vdots & &  \\
 0 & 0 & 0 & 0
\end{array} \right].  \\ \\

\begin{array}{llll}
{\rm dim} P = d+3 & {\rm dim} S = 2 & \#{\rm facets} = m-1 & {\rm excess} = m-d-2
\end{array}
\end{array} $$

\underline {Case $\epsilon \rightarrow \epsilon$.}
A variation on this second perturbation, perturbing the previously perturbed facet:
$$ 
\begin{array}{c}
\left[ \begin{array}{ccccc}
-1 & h_1^T & \epsilon_1 & 0 & \epsilon_3 \\
-1 & h_2^T & 0 & -1 & 0\\
-1 & h_2^T & 0 & 1 & 0\\
-1 & h_3^T  & 0 & 0 & 0 \\
  &  \vdots &   &  & \\
-1 & h_m^T & 0 & 0 & 0 
\end{array} \right]  \cdot \left[ \begin{array}{cccc} 
1 & 0 & 0  & 0 \\  y & \cons{0} & \cons{0} & \cons{0} \\ 
0 & -1 & 0 & 0  \\ 0 & 0 & 1 & 0 \\ 0 & 0 & 0 & -1 \end{array} \right] = \left[ \begin{array}{cccc}
0 & -\epsilon_1 & 0 & -\epsilon_3 \\  0 & 0 & -1 & 0 \\ 0 & 0 & 1 & 0 \\
0 & 0 & 0 & 0 \\
 & \vdots & &  \\
 0 & 0 & 0 & 0
\end{array} \right].  \\ \\

\begin{array}{llll}
{\rm dim} P = d+3 & {\rm dim} S = 2 & \#{\rm facets} = m-1 & 
 {\scriptstyle DEGENERATE}
\end{array} 
\end{array}$$
This is a degenerate case.  We have a nonsimplicity supported in a $2$-dimensional face, 
with only one additional facet.  So these facets no longer define a vertex, but they define instead
define an edge.  In the dual setting, what we are observing is that if we perturb the same point 
twice, this only increases the dimension of the convex hull once; the dual vertices no longer form
a facet but only a ridge.

\underline{Case $\Wdg \rightarrow \epsilon$}.
Another variation, perturbing a facet that was the top or base of a previous wedge:
$$ 
\begin{array}{c}
\left[ \begin{array}{ccccc}
-1 & h_1^T & \epsilon_1 & 0 & 0 \\
-1 & h_2^T & 0 & -1 & \epsilon_3 \\
-1 & h_2^T & 0 & 1 & 0 \\
-1 & h_3^T  & 0 & 0 & 0 \\
  &  \vdots &   &  & \\
-1 & h_m^T & 0 & 0 & 0 
\end{array} \right]  \cdot \left[ \begin{array}{cccc} 
1 & 0 & 0  & 0 \\  y & \cons{0} & \cons{0} & \cons{0} \\ 
0 & -1 & 0 & 0  \\ 0 & 0 & 1 & 0 \\ 0 & 0 & 0 & -1 \end{array} \right] = \left[ \begin{array}{cccc}
0 & -\epsilon_1 & 0 & 0 \\  0 & 0 & -1 & -\epsilon_3 \\ 0 & 0 & 1 & 0 \\
0 & 0 & 0 & 0 \\
 & \vdots & &  \\
 0 & 0 & 0 & 0
\end{array} \right]. \\ \\

\begin{array}{llll}
{\rm dim} P = d+3 & {\rm dim} S = 2 & \#{\rm facets} = m-1 & {\rm excess} = m-d-2
\end{array} 
\end{array} $$

After four iterations of the perturbed wedge, applying perturbations and wedges on the
images of the original hyperplanes in order, we have
$$  \begin{array}{c}
\left[ \begin{array}{cccccc}
-1 & h_1^T & \epsilon_1 & 0 & 0 & 0 \\
-1 & h_2^T & 0 & 1 & 0 & 0 \\
-1 & h_2^T & 0 & -1 & 0 & 0 \\
-1 & h_3^T & 0 & 0 & \epsilon_3 & 0 \\
-1 & h_4^T & 0 & 0 & 0 & 1  \\
-1 & h_4^T & 0 & 0 & 0 & -1  \\
  & \vdots  & & & & \\
-1 & h_m^T & 0 & 0 & 0 & 0 
\end{array} \right]  \cdot \left[ \begin{array}{ccccc} 
1 & 0 & 0 & 0 & 0  \\  
y & \cons{0} & \cons{0} & \cons{0} & \cons{0} \\
 0 & -1 & 0 & 0 & 0 \\ 0 & 0 & 1 & 0 & 0 \\ 0 & 0 & 0 & -1 & 0 \\ 0 & 0 & 0 & 0 & 1 
  \end{array} \right] = \left[ \begin{array}{ccccc}
0 & -\epsilon_1 & 0 & 0 & 0 \\  
0 & 0 & -1 & 0 & 0 \\ 0 & 0 & 1 & 0 & 0 \\ 0 & 0 & 0 & -\epsilon_3 & 0 \\ 
0 & 0 & 0 & 0 & -1 \\ 0 & 0 & 0 & 0 & 1 \\ 0 & 0 & 0 & 0 & 0 \\
 & \vdots & & &  \\
 0 & 0 & 0 & 0 & 0
\end{array} \right]. \\ \\

\begin{array}{llll}
{\rm dim} P = d+4 & {\rm dim} S = 2 & \#{\rm facets} = m & {\rm excess} = m-d-2
\end{array} 
\end{array} $$
We have $m+2$ hyperplanes, all of which are incident to $y$.  The spaces $s_1$ and $s_3$,
given by the second and fourth columns of the right factor,
are both incident to  $m+1$ hyperplanes and behind the remaining one; which tells us
that the nonsimplicity is now $2$-dimensional, containing the vertex $y$ and supported by
$s_1$ and $s_3$.  The other two dimensions, in the directions of $s_2$ and $s_4$ are pinched
off by the wedges.  The $2$-dimensional nonsimplicity is incident to $m$ hyperplanes.

The wedge preserves the dimension of the nonsimplicity and adds one supporting facet, 
thereby also preserving the excess.  We tabulate the data for the nonsimplicity incident to the natural
image of $y_0$, including the dimension of the nonsimplicity ${\rm dim} S$ and the number of
facets supporting the space containing the nonsimplicity (tabulated under "$\# {\rm facets}$").
\begin{center}
\begin{tabular}{ccccc}
 \hline
 Iteration & ${\rm dim} P$ & ${\rm dim} S$ & $\#{\rm facets}$ & excess \\ \hline
 $0$ & $d$ & $0$ & $m$ & $m-d$ \\
 $1$ & $d+1$ & $1$ & $m-1$ & $m-d-1$ \\
 $2$ & $d+2$ & $1$ & $m$ & $m-d-1$ \\
 $3$ & $d+3$ & $2$ & $m-1$ & $m-d-2$ \\
 $4$ & $d+4$ & $2$ & $m$ & $m-d-2$ \\ 
 \multicolumn{5}{c}{$\cdots$} \\
 $2(m-d)-1$ & $2m-d-1$ & $m-d$ & $m-1$ & $0$ \\ \hline
 \end{tabular}
 \end{center}
The excess is finally $0$ -- that is, the faces at $y$ are simple -- on the $2(m-d)-1$ iteration.

The above work illustrates that there are some interesting technical nuances behind selecting the
facets to use in iterating the perturbed wedge.  In the early iterations, most of the facets at $y$ will 
produce the desired effect under perturbation; however, in later iterations, we have to be more careful
in selecting the facet to perturb.

\subsection{Restoring more general conditions}
Perhaps to navigate around these details around nonsimplicity, Santos \cite{Paco} suggests 
a corrective step in the proof of his Theorem 2.6:

\begin{quote}
"...let $a$ be a vertex of $Q^+$ and assume that the only non-simplicial facets of $S_v(Q)$
containing $a$ are $Q^+ * u$ and $Q^+ * w$ (if that is not the case, we first push $a$ to 
a point in the interior of $Q^+$ but otherwise generic, which maintains all the properties
we need and does not decrease the dual distances, by Lemma 2.2)" \end{quote}

%

The equivalent statement in the primal setting is that the only nonsimplicities are
the two vertices $y_b$ and $y^t$ and possibly the edge between them.  This is true for the first
two iterations of the perturbed wedge, but our work above illustrates that if we simply iterate
the construction in its simplest form (perturb one vertex vertically), the dimension of the
nonsimplicity grows.

So what insights can we provide about augmenting the perturbed wedge construction
by "push(ing) $a$ to a point in the interior of $Q^+$ but otherwise generic"?

For iterations of the perturbed wedge construction, we have been studying the effects at 
the one end $y$ of the spindle.  Let's consider applying the push during the third iteration.
Without the push, after three iterations we have the following equation, describing
the nonsimplicity at the natural image of $y$.

$$
\begin{array}{c}
\footnotesize
\left[ \begin{array}{ccccc}
-1 & h_1^T & \epsilon_1 & 0 & 0 \\
-1 & h_2^T & 0 & -1 & 0\\
-1 & h_2^T & 0 & 1 & 0\\
-1 & h_3^T  & 0 & 0 & \epsilon_3 \\
  &  \vdots &   &  & \\
-1 & h_m^T & 0 & 0 & 0 
\end{array} \right]  \cdot \left[ \begin{array}{cccc} 
1 & 0 & 0  & 0 \\  y & \cons{0} & \cons{0} & \cons{0} \\ 
0 & -1 & 0 & 0  \\ 0 & 0 & 1 & 0 \\ 0 & 0 & 0 & -1 \end{array} \right] = \left[ \begin{array}{cccc}
0 & -\epsilon_1 & 0 & 0 \\  0 & 0 & -1 & 0 \\ 0 & 0 & 1 & 0 \\
0 & 0 & 0 & -\epsilon_3 \\
 & \vdots & &  \\
 0 & 0 & 0 & 0
\end{array} \right].  \\ \\ 

\begin{array}{llll}
{\rm dim} P = d+3 & {\rm dim} S = 2 & \#{\rm facets} = m-1 & {\rm excess} = m-d-2
\end{array}
\end{array} 
$$

Remember that we work in the primal setting while Santos presents his proof in the dual setting.
So his $a$ is the natural image of our $h_3^T$.  Pushing $h_3^T$ corresponds to an additional
perturbation constrained by staying incident to the image of $y$ and maintaining distances to $x$.
The qualifying condition for $h_3^T$ is that the only nonsimple vertices incident to $h_3^T$
(before the perturbation $\epsilon_3$) are $y_b$ and $y^t$.

From the equation above, we note that before the perturbation $\epsilon_3$, the hyperplane
$h_3^T$ is supporting a $2$-dimensional nonsimplicity.  So there are at least three nonsimple
vertices incident to $h_3^T$.   The indicated perturbation reduces the 
dimension to $1$, so that {\em after} the perturbation $h_3^T$ is incident to two nonsimple
vertices.

If we perturb other facets, there is any interesting tension between the following conditions:
\begin{itemize}
\item maintaining the incidence at $y$;
\item perturbing apart the incidences at images of $y_b$;
\item maintaining distances from $x$.
\end{itemize}

It would be interesting to describe the details of such a combination of perturbations.


\section{Summary}
Santos' construction of the first known counterexample to the Hirsch conjecture, for bounded
polytopes, follows the strategy of first finding a counterexample to the nonrevisiting conjecture.
Santos constructs a $5$-dimensional all-but-simple spindle $(P,x,y)$ of length $6$, which is
a counterexample to the nonrevisiting conjecture.

For simple polytopes, if we had a counterexample to the nonrevisiting conjecture, we would
produce the corresponding counterexample to the Hirsch conjecture through repeated wedging,
over all the facets not incident to $x$ or $y$.  However, Santos $5$-dimensional spindle is not simple.  Every facet is incident to either $x$ or $y$,
so we need an alternate method to produce the corresponding counterexample to the Hirsch
conjecture.  Santos has offered the perturbed wedge to accomplish this.

In these working notes, we have offered some technical details regarding the nonsimplicities
under iterations of the perturbed wedge construction.


\bibliographystyle{alpha}

\bibliography{../Biblio/poly}

\begin{thebibliography}{KW67}

\bibitem[Gr{\"u}67]{Gr}
B.~Gr{\"u}nbaum.
\newblock {\em Convex Polytopes}.
\newblock John Wiley \& Sons, 1967.

\bibitem[Hol03]{Hnr}
F.B. Holt.
\newblock Maximal nonrevisiting paths in simple polytopes.
\newblock {\em Discrete Mathematics}, (1-3):105--128, 2003.

\bibitem[Hol04]{Hblend}
F.B. Holt.
\newblock Blending simple polytopes at faces.
\newblock {\em Discrete Mathematics}, (1-3):141--150, 2004.

\bibitem[KW67]{KW}
V.~Klee and D.W. Walkup.
\newblock The $d$-step conjecture for polyhedra of dimension $d<6$.
\newblock {\em Acta Mathematica}, 117:53--78, 1967.

\bibitem[San10]{Paco}
F.~Santos.
\newblock A counterexample to the {H}irsch {C}onjecture.
\newblock {\em arXiv}, 1006.2814v1:1--27, 2010.

\bibitem[Zie95]{Zg}
G.~Ziegler.
\newblock {\em Lectures on Polytopes}, volume 152 of {\em Graduate Texts in
  Mathematics}.
\newblock Springer-Verlag, 1995.

\end{thebibliography}

\end{document}